\newtheorem{theorem}{Theorem}[section]
\newtheorem{lemma}[theorem]{Lemma}
\newtheorem{proposition}[theorem]{Proposition}
\newtheorem{corollary}[theorem]{Corollary}
\theoremstyle{definition}
\newtheorem{definition}[theorem]{Definition}
\theoremstyle{remark}
\newtheorem{remark}[theorem]{Remark}
\numberwithin{equation}{section}
\def\Proj{\mathop\mathrm{Proj}\nolimits}
\def\spec{\mathop\mathrm{Spec}\nolimits}
\def\Tor{\mathop\mathrm{Tor}\nolimits}
\def\Ker{\mathop\mathrm{Ker}\nolimits}
\def\im{\mathop\mathrm{Im}\nolimits}
\def\gcm{\mathop\mathrm{GCM}\nolimits}
\def\de{\delta}		\def\ga{\gamma}
\def\dd{\partial}
\def\al{\alpha}		\def\be{\beta}
\def\vi{\varphi}	\def\tvi{\tilde\varphi}
\def\si{\sigma}		\def\bio{\bar\iota}
\def\La{\Lambda}
\def\De{\Delta}
\def\Om{\Omega}		\def\bom{\overline\Omega}
\def\mP{\mathbb{P}}
\def\gM{\mathfrak m}
\def\gP{\mathfrak p}
\def\dF{\mathfrak F}
\def\kM{\mathcal M}
\def\hM{\overline{M}}
\def\oM{\overline{\mathcal M}}
\def\kW{\mathcal W}
\def\kF{\mathcal F}
\def\vom{\varOmega}
\def\aK{\mathbbm k}
\def\lb{\textup{(}}	\def\rb{\textup{)}}
\def\fY{\mathbf y}	\def\fX{\mathbf x}
\def\fD{\mathbf d}
\def\bK{\mathbf K}
\def\oR{\overline{R}}
\def\bu{_\bullet}
\def\bop{\bigoplus}
\def\+{\oplus}
\def\*{\otimes}
\def\8{\infty}
\def\mt{\mbox{-}}
\def\sb{\subset}	\def\sbe{\subseteq}
\def\xarr{\xrightarrow}
\def\Ar{\Rightarrow}
\def\xx{\times}
\def\MtenL{\mathcal{M} \otimes_{\Lambda} L}
\def\gnr#1{\langle\,#1\,\rangle}
\def\row#1#2{\left( #1_1 , #1_2 , \dots , #1_{#2} \right)}
\def\lst#1#2{ #1_1 , #1_2 , \dots , #1_{#2} }
\def\mtr#1{\begin{pmatrix}#1\end{pmatrix}}
\def\smtr#1{\big(\begin{smallmatrix}#1\end{smallmatrix}\big)}
\def\lsto#1#2{ #1_0 , #1_1 , \dots , #1_{#2} }
\def\cm{Cohen--Macaulay}
\def\iff{if and only if }
\def\acm{arithmetically \cm}
\author[Y. Drozd]{Yuriy A. Drozd}
\author[O. Tovpyha]{Oleksii Tovpyha}
\title[Cohen--Macaulay Rings of Wild Type]{Graded Cohen--Macaulay Rings of Wild Cohen--Macaulay Type}
\address{Institute of Mathematics, National Academy of Sciences, 01601 Kyiv, Ukraine}
\email{y.a.drozd@gmail.com,\,drozd@imath.kiev.ua}
\email{tovpyha@gmail.com}
\urladdr{http://www.imath.kiev.ua/$\sim$drozd}
\subjclass[2010]{13C14, 13H10, 14J60}
\keywords{Cohen--Macaulay rings, Cohen--Macaulay modules, Cohen--Macaulay (representation) type, arithmetically
Cohen--Macaulay varieties}
\date{}
\begin{document}






\begin{abstract}
We give sufficient conditions for a standard graded \cm\ ring, or equivalently, an arithmetically
\cm\ projective variety, to be \cm\ wild in the sense of representation theory. In particular, these 
conditions are applied to hypersurfaces and complete intersections.
\end{abstract}
\maketitle

\section{Introduction}
\label{s1} 

There is a long-standing problem to classify \cm\ rings according the complexity of the classification
of \cm\ modules over them. The known results about such classification gave rise to the conjecture that
all \cm\ rings split into three ``\cm\ types'', namely, \emph{\cm\ discrete} (including \emph{\cm\ finite}), 
\emph{\cm\ tame} and \emph{\cm\ wild}. It is expected to hold in general, though only proved, completely or partially, in several special cases (cf. \cite{jac,dr,gk,dg} for one-dimensional, \cite{en,aus,dgk} 
for two-dimensional, \cite{eh} for graded cases). 

In this paper we give some sufficient conditions for a graded \cm\ ring to be \cm\ wild. Equivalently,
it can be applied to \emph{\acm} (\emph{ACM}) projective varieties $X \subset {\mathbb{P}^n}$
and \emph{ACM sheaves}, giving conditions for such a variety to be \acm\ wild. In particular, we apply
the obtained results to hypersurfaces and complete intersections. 
Note that we consider the ``\emph{algebraical wildness}'', contrary to the viewpoint of \cite{ch,mr,mrpl},
where the wildness means ``\emph{geometrical wildness}'' (we explain the difference below). Note also that,
if $\dim X>1$, \acm\ finiteness (tameness, wildness) is not an invariant of $X$, but of the embedding
$X\hookrightarrow\mP^n$.

We fix a field $\aK$, which is supposed to be infinite, but not necessarily algebraically closed 
or of characteristic $0$.
All algebras are $\aK$-algebras. A \emph{standard graded algebra} is a graded $\aK$-algebra 
$R=\bop_{i=0}^\8R_i$ such that $R_0=\aK$ and $R$ is generated in degree $1$, i.e. $R=\aK[R_1]$.
We denote by $\gM=\bop_{i=1}^\8$ the maximal graded ideal in $R$. We always
suppose that $R$ is \cm\ and denote by $\gcm(R)$ the category of graded maximal \cm\ $R$-modules. 
If we consider $R$ as a homogeneous coordinate ring of a projective variety $X=\Proj R$, then $X$ is
an \emph{\acm\ \lb ACM\rb\ variety} and the category $\gcm(R)$ is equivalent to the category of 
\emph{\acm\ \lb ACM\rb\ coherent sheaves} over $X$ \cite{ch}. If $R$ is an \emph{isolated singularity}, i.e.
all rings $R_\gP$, where $\gP\in\spec R\setminus\{\gM\}$, are regular, then $X$ is a regular variety
and any \emph{ACM} coherent sheaf over $X$ is locally free (or a vector bundle). 

Recall definitions of \emph{\cm\ types} of \cm\ algebras (in graded case).

\begin{definition}\label{11} 
 Let $R$ be a graded \cm\ algebra. It is said to be
 \begin{itemize}
 \item \emph{\cm\ finite} (\emph{CM-finite}) if there is only a finite number of indecomposable
 graded \cm\ $R$-modules (up to shift and isomorphism); otherwise it is said to be \emph{\cm\ infinite}
 (\emph{CM-infinite});
 
 \item \emph{\cm\ discrete} (\emph{CM-discrete}) if, for any fixed rank $r$, there are only finitely
 many \cm\ $R$-modules (up to shift and isomorphism).\!%
 \footnote{\,Sometimes they say \emph{countable} instead of \emph{discrete}, since in this case there is
 only a countable set of \cm\ $R$-modules (up to isomorphism). Nevertheless, it is not very felicitous, since
 if the field $\aK$ is countable it is always the case.}
 \end{itemize}
\end{definition}

 To give definitions of other \cm\ types we need the notion of \emph{families} of \cm\ modules
 analogous to that used in the representation theory of algebras (cf. \cite{d2}).

\begin{definition}\label{12} 
\begin{enumerate}
\item Let $\La$ be a $\aK$-algebra (not necessarily commutative). We consider $R\*\La$ as a graded
algebra setting $(R\*\La)_i=R_i\*\La$. A \emph{family of graded \lb maximal\rb\ 
\cm\ $R$-modules over $\La$} is a finitely generated graded $R\mt\La$-bimodule $\kM$ such that $\kM$
is flat over $\La$ and for every finite dimensional $\La$-module $L$ the $R$-module $\MtenL$ is (maximal) \cm\
over $R$.

We say that the $R$-modules isomorphic to $\MtenL$ \emph{belong to the family} $\kM$.

\item A family $\kM$ of \cm\ $R$-modules over $\La$ is said to be \emph{strict} if, for any finite
dimensional $\La$-modules $L$ and $L'$, 
 \begin{enumerate}
 \item $\MtenL\simeq\MtenL'$ implies $L\simeq L'$;
 \item if $L$ is indecomposable, so is $\MtenL$.
 \end{enumerate} 
\end{enumerate}
If $\kM$ is strict and $\La=\aK[Y]$, where $Y$ is an affine algebraic variety over $\aK$ of dimension $n$, 
we say that $\kM$ is an \emph{$n$-parameter family of graded \cm\ $R$-modules}.
Note that even in this case our definition does not coincide with the ``usual'' definition of a flat 
family of $R$-modules with the base $Y$. Obviously, a family in our sense is also a flat family with the base $Y$, but we do not know whether (or when) the contrary is true.
\end{definition}

\begin{definition}\label{13} 
 A graded \cm\ algebra $R$ is said to be
 \begin{enumerate}
 \item \emph{strictly \cm\ infinite} (\emph{strictly CM-infinite}) if it has a strict family of graded
 \cm\ modules over the polynomial algebra $\aK[x]$.\!%
 \footnote{\,Obviously, in this case there are infinitely many ranks such that there are infinitely many 
 indecomposable \cm\ modules of this rank.}

 \item \emph{\cm\ tame} (\emph{CM-tame}) if, for any fixed rank $r$ there is a finite set $\dF$ of strict
 one-parameter families of graded \cm\ $R$-modules such that for every indecomposable graded \cm\
 $R$-module $M$ of rank $r$ some shift $M(k)$ belongs to a family from $\dF$.
 
 \item \emph{algebraically \cm\ wild} (or briefly \emph{CM-wild}) if for any finitely generated 
 $\aK$-algebra $\La$ there is a strict family of graded \cm\ $R$-modules over $\La$.
 \end{enumerate}
 The last condition means non-formally, that the classification of graded \cm\ $R$-modules is at least
 as difficult as a classification of representations of all finitely generated algebras (commutative or 
 non-commutative).
\end{definition}

If $R$ is the homogeneous coordinate ring of a projective variety $X\sb\mP^n$ and it is CM-finite
(respectively, CM-discrete, CM-tame, strictly CM-infinite or CM-wild), we say that $X$ is \emph{ACM-finite}
(respectively, \emph{ACM-discrete, ACM-tame, strictly ACM-infinite} or \emph{ACM-wild}).

\begin{remark}\label{14} 
\begin{enumerate}
\item  If $R$ is strictly CM-infinite, it cannot be CM-discrete (hence cannot be CM-finite), since
 the modules $\kM\*_{\aK[t]}\aK[t]/(t-a)$, where $a\in\aK$, are indecomposable and of the same rank
 (recall that we suppose the field $\aK$ to be infinite).

\item  It is obvious that if $R$ is algebraically \cm\ wild, it is also \emph{geometrically \cm\ wild} in the 
 sense that for every algebraic variety $Y$ there is a flat family of graded \cm\ $R$-modules with the
 base $Y$ such that all modules in this family are indecomposable and pairwise non-isomorphic. It is not
 known (though conjectured) whether geometrical wildness implies the algebraical one. For projective curves
 (that is for two-dimensional standard graded \cm\ algebras) it follows from the criterion of wildness proved 
 in \cite{dg}. There is a lot of examples of algebras known to be geometrically \cm\ wild such that
 their algebraic wildness has not been proved. The simplest is that of cubic surfaces in $\mP^3$
 \cite{ch}; other examples are in \cite{cmrpl,mr,mrpl}. 

\item  It is known (and easy to see) that in order to show that $R$ is CM-wild it is enough to construct
a strict family of graded \cm\ $R$-modules over \emph{some} algebra $\La$ which is wild in the sense of 
representation theory \cite[Proposition 5.3]{d2}. Examples of such algebras are the free algebra 
$\aK\gnr{x,y}$, the polynomial algebra $\aK[x,y]$ or its localization, the power series algebra 
$\aK[[x,y]]$ \cite{gp}. 
\end{enumerate}
\end{remark}

\section{The Main Theorem}
\label{s2} 

From now on $R$ denotes a standard graded \cm\ algebra of Krull dimension $d$.
The following result gives some sufficient conditions for $R$ to be ``bad'' from the point of view
of classification of \cm\ modules.

\begin{theorem}\label{main} 
 Let $\fY=\row yd$ be an $R$-sequence, where $y_i$ is a homogeneous element of degree $m_i>0$,
 $m=\sum_{i=1}^dm_i$ and $\oR=R/\fY$. Consider a homogeneous component $\oR_c$, where $c>m-d+1$.
 \begin{enumerate}
 \item If $\dim \oR_c>1$, then $R$ is strictly CM-infinite.\!%
 \footnote{\,In \cite{st} it has been proved that in this case $R$ is not CM discrete (``countable'').}
 \item If $\dim \oR_c>2$, then $R$ is CM-wild.
 \end{enumerate}
 Equivalently, if $R$ is a homogeneous coordinate ring of a projective variety $X$ in $\mP^n$,
 this variety is, respectively, strictly ACM-infinite or ACM-wild.
\end{theorem}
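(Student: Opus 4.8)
The plan is to recast ``maximal \cm'' as a freeness condition over a Noether normalization, then to produce, functorially in a finite-dimensional module $L$ over a test algebra $\La$, a maximal \cm\ $R$-module $\kM\tenL$ built from $L$ and from homogeneous representatives of $\oR_c$, and finally to show the resulting family is strict --- the hypothesis $c>m-d+1$ being exactly what makes the last step work. So, first: since $\fY=\row yd$ is a homogeneous $R$-sequence of length $d=\Kdim R$ and $R$ is standard graded \cm, the elements $\lst yd$ are algebraically independent over $\aK$, the subalgebra $A=\aK[\lst yd]$ is a polynomial ring, $R$ is a finite free graded $A$-module, and a homogeneous lift of any $\aK$-basis of $\oR=R/\fY$ is an $A$-basis of $R$; hence a finitely generated graded $R$-module is maximal \cm\ over $R$ \iff it is free over $A$. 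I would also record that $\oR$, regarded as an $R$-module, is resolved by the Koszul complex on $\fY$, so $\operatorname{reg}_R(\oR)\le m-d$; thus the hypothesis $c>m-d+1$ places $c$ strictly above $\operatorname{reg}_R(\oR)$, and this margin is what lets the rigidity step below go through.

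Next I would build the families. By Remark~\ref{14}(3) it is enough, for~(2), to exhibit a strict family over one wild algebra, and I take $\La=\aK\gnr{x,y}$; for~(1) I take $\La=\aK[x]$. Fix homogeneous $\lst zk$ in $R_c$ whose images form a $\aK$-basis of $\oR_c$ --- by the first paragraph these may be chosen among the elements of a homogeneous $A$-basis of $R$, and the hypothesis gives $k\ge2$ in case~(1), $k\ge3$ in case~(2). To a finite-dimensional $\La$-module $L$, given by operators $x_L,y_L$ on $V=\aK^{\dim L}$ (one operator in case~(1)), I would attach the graded $R$-module $N_L$ generated by a copy of $V$ in degree $0$, on which $\fY$ and $\gM^{c+1}$ act as zero and on which $z_1v=z_0\,x_L(v)$ and $z_2v=z_0\,y_L(v)$ for $v\in V$ (one such relation in case~(1)); thus $N_L$ depends on $L$ only through its degree-$c$ relations, everything else being fixed. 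This is functorial in $L$, so carrying it out over the universal $\La$-module yields a finitely generated graded $R\mt\La$-bimodule $\kN$, flat over $\La$ because the degree-$c$ relations are $\La$-linear in the data of $L$. I would then put $\kM=\Om^d(\kN)$, the $d$-th syzygy over $R\*\La$; since the $R\*\La$-free resolution of $\kN$ is termwise $\La$-flat, $\kM\tenL\cong\Om^d(N_L)$ and $\kM$ is $\La$-flat, and, being a $d$-th syzygy over the \cm\ ring $R$, the module $\kM\tenL$ has depth $\ge d$, hence is maximal \cm\ (equivalently, free over $A$).

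Then I would check strictness, that is, condition~(2) of Definition~\ref{12}: that $L\mapsto\Om^d(N_L)$ is a strict embedding. Since the submodule of relations of $N_L$ coming from $\fY$ has regularity at most $m-d+1<c$ and is $L$-independent, one shows --- and this is where $c>m-d+1$ enters --- that the $L$-dependence of $N_L$, carried entirely by its degree-$c$ relations, passes undisturbed through the first $d$ steps of the minimal graded free resolution over $R$, so that $M_L:=\Om^d(N_L)$ is the image of a $d$-th differential $\phi_d^L$ equal to a fixed map up to a correction concentrated in the degree-$c$ strand and prescribed by $x_L,y_L$. Given a degree-$0$ homomorphism $M_L\to M_{L'}$, a comparison of the two resolutions and a reading off in degree $c$, exploiting the $\aK$-linear independence of $\lst zk$ in $\oR_c$, recovers a $\La$-homomorphism $L\to L'$; this should identify the degree-$0$ part of $\operatorname{Hom}_R(M_L,M_{L'})$ with $\operatorname{Hom}_\La(L,L')$ --- or at least produce a functor reflecting isomorphisms and preserving indecomposability --- which gives (a) and (b) of Definition~\ref{12}(2). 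I expect this rigidity argument, together with checking that the prescribed relations really do give a module $N_L$ with the claimed initial resolution, to be the main obstacle.

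Granting all this, in case~(1) we obtain a strict one-parameter family, so $R$ is strictly CM-infinite (Definition~\ref{13}(1)); in case~(2) we obtain a strict family over the wild algebra $\aK\gnr{x,y}$, so $R$ is CM-wild by Remark~\ref{14}(3). Finally, when $R$ is the homogeneous coordinate ring of $X\subset\mP^n$, the equivalence between $\gcm(R)$ and the category of ACM coherent sheaves on $X$ recalled in the introduction turns these into the assertions that $X$ is, respectively, strictly ACM-infinite or ACM-wild.
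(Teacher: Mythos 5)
Your strategy is essentially the paper's: to each finite-dimensional $\La$-module $L$ you attach a graded module which is a sum of copies of $\oR$ modulo relations concentrated in degree $c$ and depending $\aK$-linearly on the matrices of $L$, you pass to the $d$-th syzygy to land in $\gcm(R)$, and you argue that no information is lost because the Koszul relations live in degrees $\le m$ while the $L$-dependent relations live in degree $c>m-d+1$. The deviations (the free algebra $\aK\gnr{x,y}$ in place of a localization of $\aK[x,y]$; the extra truncation by $\gM^{c+1}$; writing the relations as $z_1v=z_0x_L(v)$, $z_2v=z_0y_L(v)$ rather than dividing $n\oR$ by the column span of $Ie_1+A_xe_2+A_ye_3$) are cosmetic. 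The problem is that the step you describe in one sentence and explicitly defer (``I expect this rigidity argument \dots to be the main obstacle'') \emph{is} the theorem: it occupies Lemmas~\ref{22}, \ref{23}, \ref{25} and Corollary~\ref{24} of the paper. Concretely, the paper first proves by induction (Lemma~\ref{25}) that the $n$-fold Koszul complex splits off the minimal free resolution of $M=n\oR/\gnr{W}$ as a direct summand whose complement in homological degree $i$ is generated in degrees $\ge c+i-1$; it then deduces (Lemma~\ref{23}) that $\Tor_d^R(M,\oR)\simeq M(-m)$ coincides with the submodule of $\Om^d(M)\*_R\oR$ generated by its elements of degree $m$. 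This intrinsic description recovers $M$, hence $W$, hence the pair $(A_x,A_y)$ up to simultaneous conjugation, from the isomorphism class of $\Om^d(M)$ alone. Your ``comparison of the two resolutions and reading off in degree $c$'' would have to reprove exactly this; in particular, you must show that a homomorphism $\Om^d(N_L)\to\Om^d(N_{L'})$ extends to a map of resolutions compatible with the Koszul summands, which is precisely what the degree bookkeeping of Lemma~\ref{25} supplies and is not automatic. As it stands, the central claim is asserted, not proved.

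A second, smaller gap is the flatness of the family. From ``the $R\*\La$-free resolution of $\kN$ is termwise $\La$-flat'' you cannot conclude that $\kM\tenL\simeq\Om^d(N_L)$: for $-\tenL$ to preserve exactness of the resolution you need the syzygies $\Ker\De_i$ themselves to be $\La$-flat, not merely the free terms. The paper secures this via Grothendieck's Generic Freeness (Proposition~\ref{26}), at the cost of inverting one element of the commutative base $\aK[x,y]$ --- which is why it works over a localization rather than over the free algebra. With your choice $\La=\aK\gnr{x,y}$ generic freeness is unavailable; the claim can be rescued by noting that over $\aK\gnr{x,y}$ (a free ideal ring) and over $\aK[x]$ every submodule of a free module is free, but this has to be said. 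Finally, your proposed identification of the degree-zero part of $\mathrm{Hom}_R(M_L,M_{L'})$ with $\mathrm{Hom}_\La(L,L')$ is stronger than what is true or needed; the paper only establishes, and only needs, that isomorphisms are reflected and indecomposability is preserved.
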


 The proof of this theorem grounds on the relation between some $\oR$-modules and \cm\ $R$-modules. First we
 prove several lemmas. We always keep the notations and suppositions of Theorem~\ref{main}.


\begin{lemma}\label{22} 
Let $F$ be a free graded $R$-module, $L$ be a finitely generated graded $R$-module, $\vi:L \to F$ 
be such a homomorphism that the induced map $\bar{\vi}: L/\gM L \to F/\gM F$ is injective.
Then $\vi$ is a split monomorphism, i.e. there is a homomorphism $\vi':F\to L$ such that $\vi'\vi=1_L$.
Thus $F=\im\vi\+F'$, where $F'$ is also free.
\end{lemma}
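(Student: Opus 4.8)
The plan is to exploit graded Nakayama together with the freeness of $F$ to lift a splitting of $\bar\vi$ back to $R$-modules. First I would choose a homogeneous basis $\lst e{r}$ of $F$ and, using that $\bar\vi\colon L/\gM L\to F/\gM F$ is injective, pick homogeneous elements $\lst a{s}\in L$ whose images $\bar a_1,\dots,\bar a_s$ form a basis of the image subspace $\bar\vi(L/\gM L)\subseteq F/\gM F$; here $s=\dim_\aK L/\gM L$, which by graded Nakayama is the minimal number of homogeneous generators of $L$, so in fact $\lst a{s}$ is a minimal homogeneous generating set of $L$. After renumbering the $e_i$, I may assume that $\{\bar\vi(\bar a_j)\}_{j=1}^s$ together with $\{\bar e_i\}_{i=s+1}^r$ form a basis of $F/\gM F$. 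The elements $\vi(a_1),\dots,\vi(a_s),e_{s+1},\dots,e_r$ are then homogeneous elements of $F$ which generate $F$ modulo $\gM F$, hence (again by graded Nakayama, since $F$ is finitely generated and graded) they generate $F$; since their number equals $\rank F=r$ and $R$ is a domain in the relevant sense — more robustly, since a surjection $R^r\to F\cong R^r$ of graded free modules of the same finite rank is an isomorphism — they form a homogeneous basis of $F$. (If one is uneasy about $F$ having well-defined rank, one can instead note directly that $\vi(a_1),\dots,\vi(a_s),e_{s+1},\dots,e_r$ generate $F$ freely because the transition matrix to $\lst e{r}$ is a graded matrix whose reduction mod $\gM$ is invertible over $\aK$, hence the matrix itself is invertible over $R$.)

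Given this new homogeneous basis of $F$, define $\vi'\colon F\to L$ on basis elements by $\vi'(\vi(a_j))=a_j$ for $1\le j\le s$ and $\vi'(e_i)=0$ for $s+1\le i\le r$; this is a well-defined homogeneous $R$-homomorphism precisely because the listed elements are a free basis and because $\deg a_j=\deg\vi(a_j)$ as $\vi$ is degree-preserving. Then $\vi'\vi$ fixes each generator $a_j$ of $L$, hence $\vi'\vi=1_L$. Consequently $\vi$ is a split monomorphism, $F=\im\vi\oplus\Ker\vi'$, and $F'=\Ker\vi'$ is a direct summand of the free module $F$; since it is finitely generated and $R$ is graded local (in the sense that $R_0=\aK$ is a field), $F'$ is graded free by the graded analogue of the fact that projective modules over a local ring are free.

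The main obstacle is the bookkeeping in the first paragraph: one must be careful that the chosen homogeneous lifts can be assembled into an honest homogeneous basis of $F$, i.e. that ``invertible mod $\gM$'' upgrades to ``invertible over $R$'' in the graded setting. This is the graded Nakayama lemma applied to the cokernel of the map $R^r\to F$ sending the standard basis to $\vi(a_1),\dots,\vi(a_s),e_{s+1},\dots,e_r$; that cokernel is finitely generated, graded, and killed after tensoring with $R/\gM$, hence zero, so the map is surjective, and a surjective graded endomorphism-like map between free modules of equal finite rank over a Noetherian ring is injective — or, staying entirely within linear algebra over the graded ring, the transition matrix is square with unit determinant (its image in $\aK$ is invertible), hence invertible. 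Everything else is a routine diagram chase once the basis is in hand.
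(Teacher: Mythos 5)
Your proof is correct, but it takes a genuinely more hands-on route than the paper's. The paper argues in three lines: since $\bar\vi$ is an injection of graded vector spaces it admits a retraction $\bar\pi\colon F/\gM F\to L/\gM L$; freeness of $F$ lets one lift $\bar\pi$ to a graded homomorphism $\pi\colon F\to L$ with $\im(1_L-\pi\vi)\sbe\gM L$; graded Nakayama then makes $\pi\vi$ a surjective, hence bijective, endomorphism of the finitely generated module $L$, and $\vi'=(\pi\vi)^{-1}\pi$ is the desired splitting. You instead construct an explicit homogeneous basis of $F$ adapted to $\im\vi$, pushing a minimal homogeneous generating set $a_1,\dots,a_s$ of $L$ into $F$ and completing it by some of the original basis vectors, and you apply the key principle --- ``invertible mod $\gM$ upgrades to invertible over $R$'' --- to the transition matrix rather than to the endomorphism $\pi\vi$. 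Your determinant argument does go through in the graded setting: each entry $t_{ij}$ of the transition matrix is homogeneous of degree $\deg a_j-\deg e_i$, so $\det T$ is homogeneous, and if its image in $\aK=R/\gM$ is nonzero it is a nonzero scalar of degree $0$, hence a unit (this is worth saying explicitly, since in a general graded ring an element with nonzero constant term need not be a unit --- homogeneity of $\det T$ is what saves you). Both arguments rest on the same two pillars, graded Nakayama and ``surjective implies bijective'' for finitely generated modules over a commutative ring; the paper's version is shorter and basis-free, while yours pays for the bookkeeping with extra information, namely an explicit free basis $e_{s+1},\dots,e_r$ of the complement $F'=\Ker\vi'$ (and, as a byproduct, the observation that $L$ itself must be graded free, being a graded direct summand of $F$).
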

\begin{proof}
 There is a homomorphism $\bar\pi:F/\gM F\to L/\gM L$ such that $\bar\pi\bar\vi=1_{L/\gM L}$. It can be
 lifted to a homomorphism $\pi:F\to L$ such that $\im(1-\pi\vi)\sbe\gM L$. Then $\pi\vi$ is invertible,
 so $\vi$ is a split monomorphism.
\end{proof}

We denote by $\Om^i(M)$ the \emph{$i$-th syzygy} of the (graded) $R$-module $M$, i.e. the kernel of
the map $\de_{i-1}$ (or, the same, $\im \de_i$), where
\[
 (F\bu,\de\bu):\quad ...\xarr{\de_{i+1}} F_i\xarr{\de_i}F_{i-1}\xarr{\de_{i-1}}\dots 
 \xarr{\de_2} F_1\xarr{\de_1} F_0\xarr{\de_0} M\to0
\]
is the minimal graded free resolution of $M$. Note that $\Om^d(M)$ is always a \cm\ $R$-module. 
We also set $\hM=M\*_R\oR=M/\fY M$, in particular, $\bom^i(M)=\Om^i(M)\*_R\oR$.

\begin{lemma}\label{23} 
Let $W$ be a subspace of $n\oR_c$, $\gnr{W}$ be the $R$-submodule generated by $W$ and $M=n\oR/\gnr{W}$.
Then $M(-m)$ is isomorphic to the submodule of $\bom^d(M)$ generated by the elements of degree $m$.
\end{lemma}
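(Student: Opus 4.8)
The plan is to track $M(-m)$ explicitly through the minimal free resolution of $M$ over $R$ and compare with the minimal free resolution of $\oR$ over $R$, using the $R$-sequence $\fY$. Since $\fY$ is a regular sequence on $R$, the Koszul complex $K\bu(\fY)$ is a minimal graded free resolution of $\oR=R/\fY$, so $\Om^d(\oR)\simeq R(-m)$ where $m=\sum m_i$ (the top Koszul term is $R(-m)$). The point of the lemma is that the $d$-th syzygy of $M=n\oR/\gnr W$ ``sees'' the copy $n\oR(-m)$ coming from the $n$ copies of the Koszul top term, and that after reducing modulo $\fY$ the submodule of $\bom^d(M)$ generated in degree $m$ recovers exactly $M(-m)$.

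First I would set up the minimal free resolution $(F\bu,\de\bu)$ of $M$ and observe that, because $W\sbe n\oR_c$ with $c>m-d+1$ is a space of elements of degree $c$, the generators of $M$ are in degree $0$ (the $n$ copies of $\oR$) and the relations live in degree $\ge 1$; hence $F_0=nR$ and $\de_0:nR\to n\oR$ is the natural surjection composed with the quotient by $\gnr W$. Next I would compare the resolution $F\bu$ of $M$ with the Koszul resolution of $\oR$ tensored appropriately: there is a natural surjection $n\oR\to M$, inducing a comparison map of resolutions, and one checks that through degree $d$ the resolution of $M$ agrees with $n$ copies of the Koszul complex up to the contribution of $W$, which only enters in degrees $\ge c\ge m-d+2$. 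The crucial numerical point is that the generator $W$ of the defining submodule contributes a free summand $R(-c)$ to $F_1$ and hence summands of internal degree $\ge c+d-1 > m$ to $F_d$; since we are interested in the part of $\bom^d(M)=\Om^d(M)\*_R\oR$ generated in degree exactly $m$, these ``$W$-summands'' are in too high a degree to interfere. Therefore the degree-$m$ part of $\Om^d(M)$ comes precisely from the top Koszul terms $nR(-m)$ of the $n$ copies, i.e. from $\Om^d(n\oR)\simeq nR(-m)$ mapping into $\Om^d(M)$.

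Then I would identify the image: the comparison map $\Om^d(n\oR)\to\Om^d(M)$ has image the $R$-submodule of $\Om^d(M)$ generated in degree $m$, and tensoring with $\oR$ turns $\Om^d(n\oR)\simeq nR(-m)$ into $n\oR(-m)$ while $\Om^d(M)$ becomes $\bom^d(M)$. The kernel of the induced map $n\oR(-m)\to\bom^d(M)$ is the image of $\gnr W$-related boundary terms, which I would show equals $\gnr W(-m)$ exactly because the only relations among the degree-$m$ Koszul generators that survive modulo $\fY$ are the ones imposed by $W$ (the Koszul relations among the $y_i$ themselves become trivial after $\*_R\oR$). Hence the submodule of $\bom^d(M)$ generated in degree $m$ is $(n\oR/\gnr W)(-m)=M(-m)$, as claimed.

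The main obstacle I anticipate is making the degree bookkeeping rigorous: one must show that no summand of $F_d$ other than the ``pure Koszul'' summand $nR(-m)$ contributes anything in internal degree $m$, and that after $\*_R\oR$ the map $nR(-m)\to\Om^d(M)$ has kernel exactly $\gnr W(-m)$ rather than something larger coming from higher Koszul syzygies becoming degenerate. This is where the hypothesis $c>m-d+1$ is used in an essential way — it guarantees $c+d-1>m$, keeping the $W$-contributions strictly above degree $m$ — and where minimality of the resolutions (Lemma~\ref{22}, or rather Nakayama-type arguments) is needed to rule out unwanted splittings. Once the degree estimate is pinned down, the identification of the generated submodule with $M(-m)$ is a routine diagram chase.
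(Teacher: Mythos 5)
Your overall architecture matches the paper's: both arguments rest on comparing the minimal resolution $(F\bu,\de\bu)$ of $M$ with the $n$-fold Koszul complex $\bK\bu$ and on the degree estimate that the non-Koszul summands $F'_i$ of $F_i$ contain no elements of degree less than $c+i-1>m$ (this is precisely the content of the paper's Lemma~\ref{25}, and you have correctly located where the hypothesis $c>m-d+1$ enters). The step that identifies the degree-$m$ part of $\bom^d(M)$ with the image of $\bK_d\simeq nR(-m)$ is therefore sound.

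The genuine gap is the step you yourself flag and then dismiss as ``a routine diagram chase'': showing that the kernel of the induced map $n\oR(-m)\to\bom^d(M)$ is exactly $\gnr{W}(-m)$. Your proposed justification --- that the only relations among the degree-$m$ generators surviving modulo $\fY$ are those imposed by $W$ --- is an assertion, not an argument. Unwinding it, you must decide for which $u\in\bK_d$ one has $\fD_d(u)\in\fY\,\Om^d(M)$, i.e.\ you must control $\im\de_d\cap\fY F_{d-1}$ modulo $\fY\im\de_d$, and there is no elementary reason this should reduce to membership in $\gnr{W}$. The paper's mechanism here is the idea missing from your sketch: tensor the exact sequence $0\to\Om^d(M)\to F_{d-1}\to\Om^{d-1}(M)\to0$ with $\oR$ and identify the resulting kernel as $\Tor_1^R(\Om^{d-1}(M),\oR)\simeq\Tor_d^R(M,\oR)$, then compute this Tor from the Koszul resolution of $\oR$: since $\im\dd_d\sbe\fY K_{d-1}$ and $\fY M=0$, the last differential of $M\*_RK\bu$ vanishes and $\Tor_d^R(M,\oR)\simeq M\*_RK_d\simeq M(-m)$. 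This yields the isomorphism with $M(-m)$ in one stroke, after which the degree bookkeeping is needed only to show that this kernel coincides with the submodule generated in degree $m$. Without some such homological input your chase would, in effect, have to reconstruct this Tor computation by hand, so the identification is not routine and the proof as proposed is incomplete at its central point.
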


\begin{corollary}[cf. {\cite[Theorem A]{eh}}] \label{24}
Let $W$ and $W'$ are two subspaces of $n\oR_c$, $M=n\oR/\gnr{W}$, $M'=n\oR/\gnr{W'}$. 
The following conditions are equivalent:
 \begin{enumerate}
 \item $M\simeq M'$ (as graded $R$-modules).
 \item $\Om^d(M)\simeq\Om^d(M')$.
 \item There is an automorphism $\si$ of $n\oR$ such that $\si(W)=W'$.
 \end{enumerate}
\end{corollary}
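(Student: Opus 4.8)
The plan is to prove the two equivalences $(1)\Leftrightarrow(3)$ and $(1)\Leftrightarrow(2)$ separately, throughout with graded, degree-preserving homomorphisms. Two observations will drive the argument. First, since $W$ and $W'$ lie in degree $c>0$, the submodules $\gnr W$ and $\gnr{W'}$ are contained in $\gM\, n\oR$; hence $n\oR\to M$ and $n\oR\to M'$ are projective covers in the category of graded $\oR$-modules, so every graded isomorphism of targets should lift to a graded automorphism of $n\oR$. Second, Lemma~\ref{23} reconstructs $M$, up to the shift $(-m)$, from $\bom^d(M)$ in a way that any graded isomorphism must preserve.

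\emph{$(1)\Leftrightarrow(3)$.} Both $M$ and $M'$ are killed by $\fY$, hence are graded $\oR$-modules, and a graded $R$-homomorphism between them is just a graded $\oR$-homomorphism. If $\si$ is a graded automorphism of the free $\oR$-module $n\oR$ with $\si(W)=W'$, then $\si(\gnr W)=\gnr{\si(W)}=\gnr{W'}$, so $\si$ descends to a graded isomorphism $M\xrightarrow{\sim}M'$; this gives $(3)\Rightarrow(1)$. For $(1)\Rightarrow(3)$: given a graded isomorphism $f\colon M\to M'$, lift it, using freeness of $n\oR$ over $\oR$, to a homomorphism $\si\colon n\oR\to n\oR$ of degree $0$ commuting with the two projections, and lift $f^{-1}$ to $\tau\colon n\oR\to n\oR$ the same way. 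Then $\tau\si$ induces the identity on $M$ and $\si\tau$ the identity on $M'$, so both induce the identity modulo $\gM\, n\oR$; since $n\oR$ is finitely generated, the graded Nakayama lemma forces $\tau\si$ and $\si\tau$ to be automorphisms, whence $\si$ is a graded automorphism of $n\oR$. Commutativity of the projection squares (together with $f$ being an isomorphism) yields $\si(\gnr W)=\gnr{W'}$, and passing to degree-$c$ components — where $(\gnr W)_c=W$ and $(\gnr{W'})_c=W'$ because $R_0=\aK$ and $W,W'$ are the degree-$c$ generators — gives $\si(W)=W'$.

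\emph{$(1)\Leftrightarrow(2)$.} For $(1)\Rightarrow(2)$: a graded isomorphism $M\to M'$ extends to an isomorphism of the minimal graded free resolutions (minimality forces each map $F_i\to F_i'$ to be an isomorphism of degree $0$), and restricting to the $d$-th syzygies gives $\Om^d(M)\simeq\Om^d(M')$. For $(2)\Rightarrow(1)$: apply $-\*_R\oR$ to a graded isomorphism $\Om^d(M)\to\Om^d(M')$ to obtain a graded isomorphism $\bom^d(M)\to\bom^d(M')$; it carries the $\oR$-submodule of $\bom^d(M)$ generated by its elements of degree $m$ isomorphically onto the corresponding submodule of $\bom^d(M')$, and by Lemma~\ref{23} these two submodules are isomorphic to $M(-m)$ and $M'(-m)$. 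Hence $M(-m)\simeq M'(-m)$, i.e. $M\simeq M'$. (If $\simeq$ in (2) is meant only up to a shift, that shift is forced to vanish, since $M$ and $M'$ are both minimally generated in degree $0$.)

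The step I expect to be the main obstacle is the lifting in $(1)\Rightarrow(3)$: one must check both that the lift of $f$ can be chosen homogeneous of degree $0$ and — the real point — that the composites $\tau\si$ and $\si\tau$ are genuine automorphisms of $n\oR$, not merely epimorphisms. This is precisely where finite generation of $n\oR$ and the graded form of Nakayama's lemma are used. Everything else reduces to bookkeeping with homogeneous components, which is kept under control by the standardness of $R$ (so that $R_0=\aK$ and $\gnr W,\gnr{W'}$ are generated in the single degree $c$).
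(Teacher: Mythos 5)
Your proof is correct and follows essentially the same route as the paper's: the implications $(3)\Rightarrow(1)$ and $(1)\Rightarrow(2)$ are the easy ones, $(1)\Rightarrow(3)$ is done by lifting $f$ to an automorphism $\si$ of the projective cover $n\oR$ and reading off $\si(W)=W'$ from the degree-$c$ component $W=\gnr{W}_c$, and $(2)\Rightarrow(1)$ recovers $M(-m)$ inside $\bom^d(M)$ via Lemma~\ref{23}. The only difference is that you spell out the Nakayama/five-lemma details that the paper leaves implicit.
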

 Moreover, $M$ is indecomposable \iff so is $\Om^d(M)$.
\begin{proof}
 (1)$\Ar$(2) and (3)$\Ar$(1) are trivial.
 
 (2)$\Ar$(1) follows from Lemma~\ref{23}, just as the statement about indecomposability.
 
 (1)$\Ar$(3). Any isomorphism $f:M\to M'$ can be lifted to a commutative diagram
\[
 \xymatrix{
 0 \ar[r] & \gnr{W} \ar[r]\ar[d] & n\oR \ar[r]\ar[d]^{\si} & M \ar[r]\ar[d]^f & 0 \\
 0 \ar[r] & \gnr{W'} \ar[r] & n\oR \ar[r] & M' \ar[r] & 0,
 }
\]
where $\si$ is also an isomorphism. Then $\si(W)=W'$
 since $W=\gnr{W}_c$ and $W'=\gnr{W'}_c$.
\end{proof}

Before proving Lemma~\ref{23} we establish the following fact. We keep the conditions of this Lemma and
denote by

 $(K\bu,\dd\bu)$ the Koszul resolution of $\oR=R/\fY$;
 
 $(F\bu,\de\bu)$ a minimal graded free resolution of $M$;

$(\bK\bu,\fD\bu)=n(K\bu,\dd\bu)$, the $n$-fold Koszul complex, which is a minimal 
 graded free resolution of $n\oR$;

$\vi\bu:(\bK\bu,\fD\bu)\to(F\bu,\de\bu)$ a morphism of complexes induced by the
epimorphism $n\oR\to M$.

\begin{lemma}\label{25} 
 The map $\vi_i$ is a split monomorphism for each $i$ and $F_i=\im\vi_i\+F'_i$, where $F'_i$
 contains no homogeneous elements of degrees less than $c+i-1>m$.
\end{lemma}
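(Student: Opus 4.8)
The plan is to deduce everything from a single $\Tor$-computation together with Lemma~\ref{22}. Since both $(\bK\bu,\fD\bu)$ and $(F\bu,\de\bu)$ are \emph{minimal} graded free resolutions, applying $-\*_R\aK$ (where $\aK=R/\gM$) annihilates their differentials, so $\bK_i\*_R\aK=\Tor_i^R(n\oR,\aK)$, $F_i\*_R\aK=\Tor_i^R(M,\aK)$, and the reduction $\bar\vi_i$ of $\vi_i$ is exactly the map $\Tor_i^R(p,\aK)$ induced by the epimorphism $p\colon n\oR\to M$. Hence it suffices to prove that $\bar\vi_i$ is injective: Lemma~\ref{22} then gives at once that $\vi_i$ is a split monomorphism and $F_i=\im\vi_i\+F'_i$ with $F'_i$ free, and since $F'_i\*_R\aK$ is then isomorphic to the cokernel of $\bar\vi_i$, the statement about the degrees of $F'_i$ reduces to locating the degrees in which the cokernel of $\Tor_i^R(p,\aK)$ is concentrated.

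Next I would record two degree estimates. First, $(K\bu,\dd\bu)$ is the Koszul complex of the regular sequence $\fY$, so $K_i=\bop_{|S|=i}R\bigl(-\sum_{j\in S}m_j\bigr)$, the sum running over the $i$-element subsets $S\sbe\{1,\dots,d\}$; as each $m_j\ge1$, every occurring twist is at most $m-(d-i)=m-d+i$, so $\Tor_i^R(n\oR,\aK)$, being a direct sum of $n$ copies of $\Tor_i^R(\oR,\aK)$, is concentrated in degrees $\le m-d+i$. Secondly, $\gnr{W}$ is generated by $W\sbe n\oR_c$, i.e.\ in degree $c$; from the elementary observation that the degree-$e$ part of a minimal free cover of a module generated in degrees $\ge e$ injects into that module, an induction on the homological degree shows that the $j$-th term of a minimal graded free resolution of $\gnr{W}$ is generated in degrees $\ge c+j$, so $\Tor_j^R(\gnr{W},\aK)$ is concentrated in degrees $\ge c+j$.

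Then I would feed these estimates into the long exact $\Tor$-sequence of $0\to\gnr{W}\to n\oR\xarr{p}M\to0$. In a fixed degree $e\le c+i-2$ both $\Tor_i^R(\gnr{W},\aK)_e$ (which lives only in degrees $\ge c+i$) and $\Tor_{i-1}^R(\gnr{W},\aK)_e$ (which lives only in degrees $\ge c+i-1$) vanish, so $\Tor_i^R(p,\aK)$ is an isomorphism in all degrees $\le c+i-2$. The hypothesis $c>m-d+1$ amounts precisely to the inequality $m-d+i\le c+i-2$, so by the first estimate the \emph{whole} of $\Tor_i^R(n\oR,\aK)$ already lies in this range; therefore $\bar\vi_i=\Tor_i^R(p,\aK)$ is injective, and its cokernel, namely $\Tor_i^R(M,\aK)$ modulo its part of degree $\le c+i-2$, is concentrated in degrees $\ge c+i-1$. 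Together with Lemma~\ref{22} this yields that $\vi_i$ is a split monomorphism, $F_i=\im\vi_i\+F'_i$ with $F'_i$ free and generated in degrees $\ge c+i-1$ — hence with no nonzero homogeneous component in degrees $<c+i-1$ — and moreover $c+i-1\ge(m-d+2)+i-1=m-d+i+1>m$ once $i\ge d$, which is the case ($i=d$) used in the sequel.

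The step I expect to be the main obstacle is the degree bookkeeping of the third paragraph: one must see clearly that the hypothesis $c>m-d+1$ is exactly what pushes the Koszul $\Tor$-groups of $n\oR$ \emph{strictly} below the window $\{e\ge c+i-1\}$ in which the genuinely new syzygies — those coming from $\gnr{W}$ — can appear, so that $\bar\vi_i$ turns out injective rather than merely of controlled kernel. The monotonicity estimate for the minimal resolution of $\gnr{W}$ in the second paragraph is routine but deserves a careful statement, since $R$ is only assumed to be graded \cm\ and not regular, so no bound on global homological dimension is available and one must argue syzygy by syzygy.
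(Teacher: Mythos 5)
Your proof is correct, and it takes a genuinely different route from the paper's. The paper proves the lemma by induction on the homological degree, writing the differential $\de_j$ of the minimal resolution in block form with respect to the (inductively established) decomposition $F_{j-1}=\bK_{j-1}\+F'_{j-1}$, using the degree gap to kill the lower-left block, and then arguing directly that $\vi_{j+1}$ stays injective modulo $\gM$. You instead observe that, both resolutions being minimal, $\bar\vi_i$ \emph{is} $\Tor_i^R(p,\aK)$ for $p\colon n\oR\to M$, and you read off its injectivity and the degrees of its cokernel from the long exact $\Tor$-sequence of $0\to\gnr{W}\to n\oR\to M\to0$, combined with two regularity-type bounds: the Koszul $\Tor$'s of $n\oR$ live in degrees $\le m-d+i$, while those of the degree-$c$-generated module $\gnr{W}$ live in degrees $\ge c+j$ (your syzygy-by-syzygy argument for the latter is the right one, since $R$ need not be regular). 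The hypothesis $c>m-d+1$ is exactly the separation of these two degree windows, which your version makes transparent, whereas in the paper it is somewhat buried in the matrix bookkeeping; your argument also yields the slightly stronger conclusion that $F'_i$ is \emph{generated} in degrees $\ge c+i-1$, identified with a subquotient of $\Tor_{i-1}^R(\gnr{W},\aK)$. What the paper's induction buys in exchange is the explicit block shape $\de_j=\smtr{\fD_j&\be_j\\0&\ga_j}$ of the differentials, though nothing beyond the bare statement of the lemma is actually invoked later. One small point in your favour: you correctly note that the inequality $c+i-1>m$ asserted in the statement only holds for $i\ge d$ (e.g.\ $F'_1$ is generated in degree $c$, which need not exceed $m$), which is the only case used in the proof of Lemma~\ref{23}; the paper's formulation glosses over this.
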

\begin{proof}
 We use induction on $i$. For $i=0,1$ it follows from the definition of $(F\bu,\de\bu)$. Suppose that
 the claim is true for all $i\le j$. We identify $\im\vi_i$ with $\bK_i$. Since $\gM F'_{j-1}$ contains
 no elements of degree less than $c+j-2>m$, while $\bK_j$ is generated by elements of degree at most $m$,
 the matrix presentation of the homomorphism $\de_j:\bK_j\+F'_j\to\bK_{j-1}\+F'_{j-1}$ is of the form
 \[
   \de_j=\mtr{\fD_j&\be_j\\0&\ga_j}
 \]
 Let $\de_{j+1}:F_{j+1}\to\bK_j\+F'_j$ be of the form $\smtr{\al\\\be}$. Then $\fD_j\al+\be_j\be=0$.
 Consider any element $u\in F_{j+1}$ of degree $k<c+j$. Then $\be(u)=0$ since there are no elements 
 of this degree in $\gM F'_j$, hence $\fD_j\al(u)=0$ and $\al(u)\in\Ker\fD_j=\im\fD_{j+1}$. So 
 $\al(u)=\fD_{j+1}(v)=\al\vi_{j+1}(v)$ for some $v\in\bK_{j+1}$ and 
 $u-\vi_{j+1}(v)\in\Ker \de_{j+1}\sbe\gM F_{j+1}$. Therefore $F_{j+1}=\im\vi_{j+1}+\tilde F$,
 where $\tilde F$ is generated by elements of degrees at least $c+j$. Since 
 $\de_{j+1}\vi_{j+1}=\vi_j\fD_{j+1}$, $\de_{j+1}(\im\vi_{j+1})\sbe\fY F_j$. Note also that 
 $\be\vi_{j+1}=0$, since $\bK_{j+1}$ is generated by elements of degrees at most $m$.
 
 Now consider the maps $\tvi_i:\bK_i/\gM\bK_i\to F_i/\gM F_i$ induced by $\vi_i$. They are injective
 for $i\le j$. Let $v\notin\gM\bK_{j+1}$ be such that $\vi_{j+1}(v)\in\gM F_j$. Then 
 $\vi_{j+1}(v)\in\gM(\im\vi_{j+1})$, since all elements of $\tilde F$ are of degrees greater than $m$
 and $\deg v\le m$. Thus $\vi_j\fD_{j+1}(v)=\de_{j+1}\vi_{j+1}(v)\in\fY\gM F_j$. It is impossible,
 since $\fD_{j+1}(v)\notin\gM\fY\bK_j$ and $\vi_j$ is a split monomorphism. Therefore, $\tvi_{j+1}$
 is a monomorphism and $\vi_{j+1}$ is a split monomorphism by Lemma~\ref{22}.
\end{proof}

\begin{proof}[Proof of Lemma~\ref{23}]
 Consider the exact sequence
 \[
  0\to \Om^d(M)\xarr{\iota} F_{d-1}\to \Om^{d-1}(M)\to 0. 
 \]
 Tensoring it with $\oR$ we get an exact sequence
\[
 0\to \Ker\bio\to \bom^d(M)\xarr{\bio} \bar F_d\to \bio^{d-1}(M)\to 0,
\]
 where $\Ker\bio\simeq \Tor_1^R(\Om^{d-1}(M),\oR)\simeq\Tor^R_d(M,\oR)$. On the other hand,
\[
 \Tor_d^R(M,\oR)\simeq\Ker\{1\*\dd_d:M\*_R K_d\to M\*_R K_{d-1}\}\simeq M(-m),
\]
 since $K_d\simeq R(-m)$, $\im\dd_d\sbe\fY K_{d-1}$ and $\fY M=0$. Hence $\Ker\bio$ is generated by
 elements of degree $m$. On the other hand, all elements of degree $m$ in $\Om^d(M)$ are in $\im\fD_d$,
 hence in $\fY F_{d-1}$. Therefore, all elements of degree $m$ in $\bom^d(M)$ are in $\Ker\bio$.
 It means that $\Ker\bio\simeq M(-m)$ coincides with the submodule of $\bom^d(M)$ generated by
 elements of degree $m$.
\end{proof}

 As we use $R\mt\La$-bimodules, or, the same, $R\*\La$-modules, we also need the following 
 fact about projective resolutions, which follows from the Grothendieck's Generic 
 Freeness Lemma \cite[Theorem 14.4]{ei}.
 
\begin{proposition}\label{26} 
 Let $\La$ be a finitely generated commutative algebra without zero divisors, $\kM$ be a 
 finitely generated $R\*\La$-module and $k$ be a positive integer. There is a non-zero element 
 $a\in\La$ such that there is a free resolution of $\kM[a^{-1}]$ as of $R\*\La[a^{-1}]$-module
 \begin{equation}\label{e21} 
 \dots\to\kF_k \xarr{\De_k} \kF_{k-1}\xarr{\De_{k-1}}\dots 
 \to \kF_1\xarr{\De_1}\kF_0\xarr{\De_0}\kM[a^{-1}]\to 0
 \end{equation}
 such that $\kM[a^{-1}]$ and $\Ker\De_i$ are free as $\La[a^{-1}]$-module and
 $\Ker\De_i\sbe\gM\kF_i$ for all $i\le k$.
 \end{proposition}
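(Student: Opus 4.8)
The plan is to build the resolution \eqref{e21} one term at a time, inverting at each step one further nonzero element of $\La$ so that the graded module currently being resolved and its quotient modulo $\gM$ both become free over the (shrunken) base; since the resolution only has to be controlled out to step $k$, finitely many such elements suffice and their product serves as the required $a$. Two elementary facts carry the whole argument. The first is Generic Freeness: $R\*\La$ is a finitely generated $\La$-algebra (as $R$ is standard graded) and $\La$ is a Noetherian domain (a finitely generated $\aK$-algebra without zero divisors), so every finitely generated $R\*\La$-module $N$ becomes free over $\La[b^{-1}]$ after inverting a suitable $b\ne 0$. The second is graded Nakayama: any finitely generated graded $R\*\La'$-module ($\La'$ a localization of $\La$) is bounded below, since $R\*\La'$ is nonnegatively graded with degree-zero part $\La'$; hence $\gM N=N$ forces $N=0$, so homogeneous elements of $N$ whose residues generate $N/\gM N$ over $\La'$ already generate $N$ over $R\*\La'$.

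Concretely, I would first apply Generic Freeness to the graded module $\kM$ and to $\kM/\gM\kM$ --- the latter finitely generated over $(R\*\La)/\gM(R\*\La)=\La$ --- to get $a_0\ne 0$ with $\kM[a_0^{-1}]$ and $(\kM/\gM\kM)[a_0^{-1}]$ free over $\La_0:=\La[a_0^{-1}]$. Choosing homogeneous $e_1,\dots,e_r\in\kM[a_0^{-1}]$ whose residues form a $\La_0$-basis of $(\kM/\gM\kM)[a_0^{-1}]$, graded Nakayama makes the $e_j$ generators of $\kM[a_0^{-1}]$; then with $\kF_0=\bop_j(R\*\La_0)(-\deg e_j)$ and $\De_0$ carrying the standard basis to the $e_j$, tensoring the exact sequence $0\to\Ker\De_0\to\kF_0\xarr{\De_0}\kM[a_0^{-1}]\to0$ over $R$ with $R/\gM$ shows that $\kF_0/\gM\kF_0\to(\kM/\gM\kM)[a_0^{-1}]$ is an isomorphism, which forces $\Ker\De_0\sbe\gM\kF_0$. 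I would then repeat with $\Ker\De_0$ --- finitely generated and graded over the Noetherian ring $R\*\La_0$ --- in the role of $\kM$: invert one more element so that $\Ker\De_0$ and $\Ker\De_0/\gM\Ker\De_0$ are free over the new base, pick homogeneous generators of $\Ker\De_0$ lifting a basis modulo $\gM$, and obtain $\kF_1\xarr{\De_1}\Ker\De_0\hookrightarrow\kF_0$ with $\Ker\De_1\sbe\gM\kF_1$ by the same exact-sequence computation. After $k$ iterations --- continuing past $\kF_k$, if an infinite resolution is wanted, by any free resolution of $\Ker\De_k$, and absorbing the finitely many inverted elements into a single $a\in\La$ --- one obtains \eqref{e21} over $\La[a^{-1}]$ with $\kM[a^{-1}]$ and all $\Ker\De_i$ ($i\le k$) free over $\La[a^{-1}]$ and $\Ker\De_i\sbe\gM\kF_i$.

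What remains --- and this is the one place that needs care, though nothing in it is deep --- is to check that each later localization $\La'\to\La'[b^{-1}]$ leaves the part of the resolution already built intact. But this map is flat, so the truncated complex stays exact, a free $\La'$-module stays free, $\Ker$ commutes with the base change, and the minimality condition $\Ker\De_i\sbe\gM\kF_i$ --- equivalent, by the exact sequence above, to injectivity of the induced map $\kF_i/\gM\kF_i\to\Ker\De_{i-1}/\gM\Ker\De_{i-1}$ between \emph{free} $\La'$-modules (with $\Ker\De_{-1}$ read as $\kM$) --- persists because flat base change preserves injectivity. Thus the main obstacle is entirely the bookkeeping: tracking which modules and which quotients-by-$\gM$ have been made free, checking that the generator choices that force minimality remain available after the subsequent localizations, and noting that because control is needed only through step $k$ only finitely many localizations occur; the graded-Nakayama steps are then legitimate throughout precisely because finitely generated graded $R\*\La'$-modules are bounded below.
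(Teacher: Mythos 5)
Your proposal is correct and follows essentially the same route as the paper: apply Grothendieck's Generic Freeness to $\kM$ and to $\kM/\gM\kM$, build $\kF_0$ from homogeneous lifts of a $\La'$-basis of the quotient so that $\Ker\De_0\sbe\gM\kF_0$, and iterate on the kernel, absorbing the finitely many inverted elements into one $a$. If anything you are slightly more careful than the paper, which asserts the freeness of $\Ker\De_0$ over $\La'$ as ``obvious'' rather than re-invoking generic freeness, and which leaves the flat-base-change compatibility of the already-built truncation implicit.
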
 
 \begin{proof}
  We use induction by $r$. By the Grothendieck's Generic Freeness Lemma there is a non-zero $a\in\La$
  such that $\kM[a^{-1}]$ and all homogeneous components of $\oM=\kM[a^{-1}]/\gM\kM[a^{-1}]$ 
  are free over $\La'=\La[a^{-1}]$. 
  Set $\kF_0=\bop_iR[-i]\*\oM_i$. It is a free $R\*\La'$-module and there is an epimorphism
  $\De_0:\kF_0\to\kM[a^{-1}]$ such that $\Ker\De_0\sbe\gM\kF_0$. Obviously, $\Ker\De_0$ is also free
  over $\La'$. It proves the statement for $k=0$.
  Suppose that we already have found $a$ and a resolution \eqref{e21} for given $k$. Applying the same
  procedure to $\Ker\De_k$, we obtain an analogous resolution for $k+1$.
  \end{proof}

\begin{proof}[Proof of Theorem~\ref{main}]
 We prove the statement (2), since the proof of (1) is quite analogous. Namely, we construct a
 family of graded \cm\ $R$-modules over a localization of the polynomial ring $\La=\aK[x,y]$.
 
 Suppose that $\dim_\aK \oR_c>2$ and $e_1,e_2,e_3$ are linear independent elements from $\oR_c$.
 In the $R\*\La$-module $\oR\*\La$ consider the $\La$-submodule
 $\kW$ generated by $e_1\*1+e_2\*x+e_3*y$. Note that $\kW\simeq\La$ as $\La$-module. Let 
 $\kM=\oR\*\La/\gnr{\kW}$, where $\gnr{\kW}$ is the $\oR\*\La$-submodule generated by $\kW$.
 Applying Proposition~\ref{26}, we find a non-zero element $a\in\La$ and a free resolution 
 of the shape \eqref{e21} for $\kM'=\kM[a^{-1}]$ over $R\*\La$, where $\La'=\La[a^{-1}]$.
 Let $\vom=\Ker\De_{d-1}=\im\De_d$. We prove that $\vom$ is a strict family of graded \cm\
 $R$-modules over $\La'$. The exact sequence \eqref{e21} splits as a sequence of $\La'$-modules.
 Hence for any $\La'$-module $L$ the sequence
 \begin{multline*}
  \dots\to\kF_d\*_{\La'} L \xarr{\De_d\*1} \kF_{d-1}\*_{\La'} L\xarr{\De_{d-1}\*1}\dots\to \\
 \to \kF_1\*_{\La'} L\xarr{\De_1\*1}\kF_0\*_{\La'} L\xarr{\De_0\*1}\kM'\*_{\La'} L\to 0 
 \end{multline*}
 is exact, so is a minimal resolution of the $R$-module $\kM'\*_\De L$. In particular,
 $\vom\*_\De L\simeq\Om^d(\kM'\*_\De L)$ is a \cm\ $R$-module.  
  
 A finite dimensional $\La'$-module $L$ is given by two square $n\xx n$ matrices $A_x,A_y$ over $\aK$,
 where $n=\dim_\aK L$, describing respectively the action of $x$ and $y$ in a basis of $L$. Then
 $\kM'\*_{\La'} L\simeq n\oR/\gnr{\kW\*_{\La'}L}$ and $\gnr{\kW\*_{\La'}L}$ is the subspace of
 $n\oR_c$ generated by the columns of the matrix $Ie_1+A_xe_2+A_ye_3$, where $I$ is the $n\xx n$
 unit matrix. Suppose that $L'$ is another $La'$-modules of the same dimension given by a pair of
 matrices $B_x,B_y$ and $\kM'\*_{\La'} L\simeq\kM'\*_{\La'} L'$. By Lemma~\ref{23}, there is an
 automorphism $\si$ of $n\oR$ such that $\si(\kW\*_{\La'}L)=\kW\*_{\La'}L'$. This automorphism can
 be considered as invertible $n\xx n$ matrix over $\aK$ and $\si(\kW\*_{\La'}L)=\kW\*_{\La'}L'$
 means that there is an invertible matrix $\tau$ such that 
 $\si(Ie_1+A_xe_2+A_ye_3)=(Ie_1+B_xe_2+B_ye_3)\tau$. As the elements $e_1,e_2,e_3$ are linear independent,
 we obtain that $\si=\tau$, $B_x=\si A_x\si^{-1}$ and $B_y=\si A_y\si^{-1}$, hence $L\simeq L'$.
 Analogously one proves that if $L$ is indecomposable, $\vom\*_{\La'}L$ is indecomposable as well.
\end{proof}

\section{Applications}
\label{s3} 

 Theorem~\ref{main} implies wildness conditions for special types of algebras (or varieties).
 In what follows we write $\aK[\fX]=\aK[\lsto xn]$.
 
\begin{corollary}\label{31} 
 A hypersurface of degree $e\ge4$ in $\mP^n$ is strictly ACM-infinite. If $n\ge2$, it is ACM-wild.
\end{corollary}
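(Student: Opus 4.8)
The plan is to apply Theorem~\ref{main} directly. Let $R=\aK[\fX]/(f)$ be the homogeneous coordinate ring of a hypersurface $X\sb\mP^n$ of degree $e$, so that $R$ is a standard graded \cm\ algebra of Krull dimension $d=n$. First I would choose a convenient regular sequence of linear forms: since $\aK$ is infinite and $R$ is \cm\ of dimension $n$, after a generic linear change of coordinates one can take $\fY=(x_1,\dots,x_{n-1})$ (or in fact any $n-1$ generic linear forms), which form an $R$-sequence, so $m_i=1$ for all $i$ and $m=\sum m_i=n-1$. The quotient is $\oR=R/\fY\cong\aK[u,v]/(g)$ where $g$ is a form of degree $e$ in two variables $u,v$ (the images of two remaining coordinates), a standard computation.

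Next I would compute the Hilbert function of $\oR$. Since $\oR=\aK[u,v]/(g)$ with $g$ a nonzero form of degree $e$, we have $\dim_\aK\oR_c=c+1$ for $0\le c\le e-1$ and $\dim_\aK\oR_c=e$ for all $c\ge e-1$. (If $g$ is not squarefree the ring is not reduced, but this does not affect the dimension count, which only uses that $g\neq0$ has degree $e$.) The threshold in Theorem~\ref{main} is $c>m-d+1=(n-1)-n+1=0$, so it suffices to examine any component $\oR_c$ with $c\ge 1$.

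Now I would invoke the two parts of Theorem~\ref{main}. For strict CM-infiniteness we need a component $\oR_c$ with $c\ge1$ and $\dim_\aK\oR_c>1$; taking $c=e-1\ge3$ (here $e\ge4$) gives $\dim_\aK\oR_c=e\ge4>1$, so $R$ is strictly CM-infinite, i.e. $X$ is strictly ACM-infinite. For CM-wildness we additionally need $\dim_\aK\oR_c>2$ for some $c\ge1$; again $c=e-1$ gives $\dim_\aK\oR_{e-1}=e\ge4>2$, so Theorem~\ref{main}(2) applies and $R$ is CM-wild, hence $X$ is ACM-wild. The only point requiring the hypothesis $n\ge2$ is that for $n=1$ the dimension $d=n$ would force $\oR=R/\fY$ with an empty regular sequence, i.e. $\oR=R$ itself is two-dimensional graded and the reduction argument degenerates; more to the point, for a hypersurface in $\mP^1$ there is essentially nothing to classify. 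With $n\ge2$ we have $d=n\ge2$ and $m-d+1=0$, so the hypotheses of Theorem~\ref{main} are genuinely met.

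I do not expect a serious obstacle here: the entire content is (a) exhibiting a linear regular sequence, which follows from primeness/\cm ness of $R$ and infinitude of $\aK$ by a generic-hyperplane-section argument, and (b) the elementary Hilbert-function computation for $\aK[u,v]/(g)$. The mild subtlety worth a sentence in the writeup is verifying that $n-1$ generic linear forms really do form an $R$-sequence on the hypersurface ring (equivalently, that a generic linear subspace section of $X$ of the right codimension is again ACM of dimension one), and handling the bookkeeping when $e-1\ge n-1$ versus $e-1<n-1$ for the choice of which component to feed into the theorem — but taking $c=e-1$ sidesteps this since $\dim_\aK\oR_{e-1}=e$ regardless.
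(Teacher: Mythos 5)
Your proposal misapplies Theorem~\ref{main} in a way that cannot be repaired along the lines you describe. The theorem requires the $R$-sequence $\fY=\row yd$ to have length exactly $d=\Kdim R$; this is essential, because the proof rests on the Koszul resolution of $\oR=R/\fY$ having length $d$ (so that $K_d\simeq R(-m)$ and $\Tor_d^R(M,\oR)\simeq M(-m)$ in Lemma~\ref{23}) and on $\oR$ being Artinian, so that the subspaces $W\sbe n\oR_c$ define finite-length modules $M=n\oR/\gnr{W}$ recoverable from their $d$-th syzygies. For the hypersurface ring $R=\aK[\lsto xn]/(f)$ one has $d=n$, but your sequence $(x_1,\dots,x_{n-1})$ has only $n-1$ elements; your quotient $\aK[u,v]/(g)$ is one-dimensional rather than Artinian, your threshold computation $m-d+1=0$ is based on this wrong length, and the hypotheses of the theorem are simply not satisfied.

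The obvious repair --- taking a full system of parameters consisting of $n$ linear forms --- also fails: then $m=d=n$, the threshold becomes $c\ge2$, and $\oR\simeq\aK[t]/(t^e)$ has $\dim_\aK\oR_c\le1$ in every degree, so neither part of Theorem~\ref{main} applies. This is precisely why the paper's proof does \emph{not} use a linear sequence: it takes $y_1=x_1^2$ with the remaining $y_i=x_i$ linear (and, for the wildness statement, $y_1=x_1^2$, $y_2=x_2^2$). Raising the degree of one (resp.\ two) of the $y_i$ increases $m$ to $n+1$ (resp.\ $n+2$), so the admissible degrees start at $c=3$ (resp.\ $c=4$), while an extra variable survives modulo $\fY$ in those degrees, giving $\dim_\aK\oR_3=2$ (resp.\ $\dim_\aK\oR_4\ge3$); the hypothesis $e\ge4$ ensures that $f$ does not cut these components down below the required bound. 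That choice of a non-linear regular sequence is the actual content of the corollary's proof and is absent from your argument.
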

\begin{proof}
 Consider the homogeneous coordinate ring $R=\aK[\fX]/(f)$ of $X$, where $\deg f=e$. It is of
 Krull dimension $n$. As $\aK$ is infinite, we can suppose that $\row xn$ is an $R$-sequence. Set first
 $y_1=x_1^2$ and, if $n>1$, $y_i=x_i$ for $2\le i\le n$. Then $m-d+2=3$ and $\dim_\aK\oR_3=2$, so $R$
 is strictly CM-infinite. 
 
 If $n\ge2$, set $y_1=x_1^2,\,y_2=x_2^2$ and, if $n>2$, $y_i=x_i$ for $3\le i\le n$. Then $m-d+2=4$
 and $\dim_\aK\oR_4\ge3$. Hence $R$ is CM-wild.
\end{proof}

Note that similar result for local rings was obtained by V.\,V.\,Bon\-da\-renko \cite{b} 
using the technique of matrix factorizations. For graded rings it has also been proved in \cite{cl}.


\begin{corollary}\label{32} 
 Let $X\sb\mP^n$ be a complete intersection in $\mP^n$ of codimenion $k>1$ defined by polynomials
 $\lst fk$ of degrees $\deg f_j=d_j>1$. If $k\ge3$ or $d_1\ge3$, then $X$ is ACM-wild.
\end{corollary}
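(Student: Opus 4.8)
The plan is to reduce Corollary~\ref{32} to an application of Theorem~\ref{main} by choosing an appropriate $R$-sequence inside the homogeneous coordinate ring $R=\aK[\fX]/(\lst fk)$ and then estimating the dimension of a suitable graded component of the quotient. Here $\dim X=n-k$, so $R$ has Krull dimension $d=n-k+1$. Since $\aK$ is infinite and $R$ is \cm, a generic choice of $d$ linear forms $\lst\ell d$ gives an $R$-sequence, and $\oR_0=R/(\lst\ell d)$ is then the Artinian reduction, a standard graded Artinian \cm\ algebra whose Hilbert series is $\prod_{j=1}^k(1+t+\dots+t^{d_j-1})$. To get homogeneous elements of degree $>1$ into the regular sequence (which is what makes $m-d+1$ large enough), I would instead take the sequence to consist of one or two squares of linear forms together with linear forms, exactly as in Corollary~\ref{31}: this is legitimate because a generic such family is still an $R$-sequence.

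First I would treat the case $k\ge 3$. Take $y_1=\ell_1^2$, $y_2=\ell_2^2$, and $y_i=\ell_i$ for $3\le i\le d$, with the $\ell_i$ generic linear forms, so $m=\sum m_i=d+2$ and the relevant bound in Theorem~\ref{main} is $c>m-d+1=3$, i.e. we look at $\oR_4$ where $\oR=R/\fY$. The Hilbert series of $\oR$ is $(1+t)^2\prod_{j=1}^k(1+t+\dots+t^{d_j-1})$ (the square of a linear form contributes the factor $(1+t)$ to the Artinian reduction, just as in the hypersurface case, since $\ell_i^2$ is part of a regular sequence extending the linear part). With all $d_j\ge 2$ and $k\ge 3$, the coefficient of $t^4$ in this product is easily seen to be at least $3$: the factor $(1+t)^2$ already contributes, and three factors $(1+t+\dots)$ with $d_j\ge2$ give enough monomials of degree $4$. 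Hence $\dim_\aK\oR_4\ge 3$ and Theorem~\ref{main}(2) applies.

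For the remaining case $k=2$ with $d_1\ge 3$ (and $d_2\ge 2$), I would take $y_1=\ell_1^2$ and $y_i=\ell_i$ for $2\le i\le d$, so $m=d+1$ and the bound becomes $c>m-d+1=2$, i.e. we examine $\oR_3$, whose generating function is $(1+t)(1+t+\dots+t^{d_1-1})(1+t+\dots+t^{d_2-1})$. Since $d_1\ge 3$ this has $t,t^2,t^3$ available in the first nontrivial factor and at least $1,t$ in the second, and together with the $(1+t)$ factor the coefficient of $t^3$ is at least $3$; thus $\dim_\aK\oR_3\ge 3$ and again Theorem~\ref{main}(2) gives ACM-wildness. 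The routine part is the bookkeeping with Hilbert series, and the one point that needs a word of justification — the main (minor) obstacle — is that replacing some linear forms by their squares still yields an $R$-sequence; this follows because $R$ is \cm, so any system of parameters consisting of homogeneous elements is a regular sequence, and a generic choice of forms of the prescribed degrees is a system of parameters on the \cm\ ring $R$. I would state this reduction once and then simply record the two Hilbert-series computations.
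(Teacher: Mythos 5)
Your argument is essentially the paper's: both reduce to Theorem~\ref{main} by completing the $f_j$ to a homogeneous regular sequence containing squares of (generic) linear forms and then reading off $\dim_\aK\oR_c$ from the Hilbert series $\prod_i\frac{1-t^{e_i}}{1-t}$ of the resulting Artinian complete intersection. The paper uses a single uniform choice --- one squared form, $y_1=x_k^2$, and the rest linear --- so that $m-d+1=2$ and one always inspects $\oR_3$; for $k\ge3$ its dimension is bounded below by the coefficient of $t^3$ in $(1+t)^{k+1}$, i.e.\ $\binom{k+1}{3}\ge4$, and for $k=2$, $d_1\ge3$ one gets at least $3$, which is your second case verbatim.

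The one genuine (though easily repaired) defect is in your $k\ge3$ branch: taking two squared linear forms $\ell_1^2,\ell_2^2$ among the $y_i$ requires $d=n-k+1\ge2$, i.e.\ $k\le n-1$. For a zero-dimensional complete intersection ($k=n\ge3$, e.g.\ eight points in $\mP^3$ cut out by three quadrics) the ring $R$ has Krull dimension $1$, so there is no room for a second element in the regular sequence and your construction does not apply, although the statement is meant to cover this case. The fix is to use a single square there as well: with $y_1=\ell_1^2$ and the remaining $y_i$ linear one has $m-d+1=2$ and $\dim_\aK\oR_3\ge\binom{k+1}{3}\ge4>2$ for every $k\ge3$, which is exactly the paper's uniform choice. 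Everything else in your write-up --- the genericity argument for the system of parameters, the fact that a homogeneous system of parameters on a \cm\ graded ring is a regular sequence, and the two coefficient estimates --- is correct.
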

\begin{proof}
 We consider again the homogeneous coordinate ring $R$ of $X$, $R=\aK[\fX]/\row fk$,
  and suppose that
 $x_k,x_{k+1},\dots,x_n$ is an $R$-sequence. Set $y_1=x_k^2$ and, if $n>k$, $y_i=x_{k+i-1}$ for 
 $1<i\le n-k+1$. Then $m-d+2=3$ and one easily check that $\dim\oR_3\ge4$ if either $k\ge3$ or 
 $d_1\ge3$. Therefore, $R$ is CM-wild. 
\end{proof}

 Note that cubic surfaces in $\mP^3$ as well as intersections of two quadrics in $\mP^4$ are known to be 
 geometrically \cm\ wild \cite{ch,mrpl}. The proof that they are algebraically \cm\ wild, as well as
 their analogues of higher dimensions, is an intriguing problem. Actually, in \cite{ch,mrpl} it is proved
 even more: there are families of non-isomorphic indecomposable \emph{Ulrich} bundles of arbitrary big
 dimensions. In \cite{bdiss} it is proved that a \emph{local} algebra $\aK[[\lsto xn]]/(f)$ is CM-wild if
 $n\ge3$ and $f$ does not contain linear and quadratic terms.


\end{document}